\newtheorem{theo}{Theorem}
\newtheorem{coro}[theo]{Corollary}
\newtheorem{lem}[theo]{Lemma}
\makeatletter \@addtoreset{equation}{section}
\newcommand{\bN} { {\mathbb{N}}}
\newcommand{\bZ} { {\mathbb{Z}}}
\newcommand{\bK} { {\mathbb{K}}}
\DeclareMathOperator{\ann}{ann}
\newcommand{\la} { {\langle}}
\newcommand{\ra} { {\rangle}}
\newcommand{\re}{\noindent{\bfseries Remark. }}
\def\qed{\hfill \rule{4pt}{7pt}}
\def\pf{\noindent {\it Proof.} }
\begin{document}
\begin{center}
 {\large \bfseries Congruences for sums of Delannoy numbers and polynomials}
\end{center}

\begin{center}
{  Rong-Hua Wang}$^{1}$ and {Michael X.X. Zhong}$^{2}$

   $^1$School of Mathematical Sciences\\
   Tiangong University \\
   Tianjin 300387, P.R. China\\
   wangronghua@tiangong.edu.cn \\[10pt]

   $^2$School of Science\\
   Tianjin University of Technology \\
   Tianjin 300384, P.R. China\\
   zhong.m@tjut.edu.cn
\end{center}

\vskip 6mm \noindent {\bf Abstract.}
In this paper, we apply the power-partible reduction to study arithmetic properties of sums involving Delannoy numbers $D_k$ and polynomials $D_k(z)$.
Let $v\in\bN$ and $p$ be an odd prime.
It is proved that, for any $z\in\bZ\setminus\{0,-1\}$, there exist $c_v\in z^{-v}\bZ[z]$ and $\tilde{c}_v\in (z+1)^{-v}\bZ[z]$, both free of $p$ and can be determined mechanically, such that
\begin{equation*}
\sum_{k=0}^{p-1}(2k+1)^{2v}D_k(z)\equiv
c_v \left(\frac{-z}{p}\right) \pmod {p}
\end{equation*}
if $\gcd(p,z)=1$ and
\begin{equation*}
\sum_{k=0}^{p-1}(-1)^k(2k+1)^{2v}D_k(z)\equiv
\tilde{c}_v \left(\frac{z+1}{p}\right) \pmod {p}
\end{equation*}
if $\gcd(p,z+1)=1$.
Here $(-)$ denotes the Legendre symbol.

When $n$ is a power of $2$, we find there exist odd integers $\rho_v$ and even integers $\tilde{\rho}_v$, both independent of $n$ and can be determined mechanically, such that
\[
\sum_{k=0}^{n-1}(2k+1)^{2v+1}D_k\equiv \rho_v n \pmod {n^3}
\]
and
\[
\sum_{k=0}^{n-1}(-1)^k(2k+1)^{2v+1}D_k\equiv \tilde{\rho}_v n^2 \pmod {n^3}.
\]
The case $v=1$ in the last congruence confirms a conjecture of Guo and Zeng in 2012.

\noindent {\bf Keywords}: congruence; Delannoy polynomial; Delannoy number.

\noindent {\bf Mathematics Subject Classification}: 11A07, 11B65.
\section{Introduction}

Let $\bN$ and $\bZ^{+}$ denote the set of nonnegative integers and positive integers respectively.
The central Delannoy numbers $D_n$, which appear ubiquitously in enumerative combinatorics, are defined as
\begin{equation*}
D_{n}=\sum\limits_{k=0}^{n}\binom{n}{k}\binom{n+k}{k}.
\end{equation*}
It is well-known that $D_n$ is the number of lattice paths from the point $(0, 0)$ to $(n, n)$ with steps choosing from the set $\{(1, 0), (0, 1), (1, 1)\}$.
One can consult item A001850 in the OEIS
\cite{OEIS}
for other classical combinatorial interpretations.

Sun \cite{Sun2011,Sun2014} discovered many interesting arithmetic properties of $D_n$.
In particular, he showed that, for any positive integer $n$,
\[
\frac{1}{n^2}\sum_{k=0}^{n-1}(2k+1)D_k^2\in\bZ
\]
and that, for any prime $p>3$,
\[
\sum_{k=1}^{p-1}\frac{D_k}{k^2}\equiv (-1)^{(p-1)/2}2E_{p-3}
\pmod p
\]
where $E_k$ are the Euler numbers.

Sun \cite{Sun2011} introduced the Delannoy polynomials
\[
D_n(z)=\sum_{k=0}^{n}\binom{n}{k}\binom{n+k}{k}z^k
,\quad n\in\bN,
\]
and proved that \cite{Sun2014} for any $n\in\bZ^{+}$,
\begin{equation*}\label{eq:(2k+1)Dk}
\frac{1}{n}\sum_{k=0}^{n-1}(2k+1)D_k(z)
=\sum_{k=0}^{n-1}\binom{n}{k+1}\binom{n+k}{k}z^k
\in\bZ[z],
\end{equation*}
and thus
\[
\sum_{k=0}^{n-1}(2k+1)D_k(z)\equiv 0\pmod n.
\]

%Recently, Jia and the authors \cite{JWZ-rev} provided a generalized power-partible reduction, which was crucial in the discovery of the following congruences of $S_n(z)$ and $s_n(z)$:
%for any odd prime $p$, nonnegative integer $r\in\bN$, $\varepsilon\in\{-1,1\}$ and $z\in\bZ$ with $p\nmid z(z+1)$, one has
%\[
%\sum_{k=0}^{p-1}(2k+1)^{2r+1}\varepsilon^k S_k(z)\equiv 1\pmod {p}\quad \text{and}
%\quad
%\sum_{k=0}^{p-1}(2k+1)^{2r+1}\varepsilon^k s_k(z)\equiv 0\pmod {p}.
%\]	

%
%
%where $D_n(z)$ are the $n$th central Delannoy polynomials given by
%\[
%D_n(z)=\sum_{k=0}^{n}\binom{n}{k}\binom{n+k}{k}z^k.
%\]
%Note that $D_n(z)$ specialize into the (central) Delannoy numbers $D_n$ when $z=1$.

The Ap\'ery numbers \cite{Apery1979,Poorten1979} given by
\[
A_n=\sum_{k=0}^{n}\binom{n}{k}^2\binom{n+k}{k}^2,\quad n\in\bN
\]
arose in Ap\'ery's proof of irrationality of $\zeta(3)=\sum_{n=1}^{\infty}\frac{1}{n^3}$.
Congruences for Ap\'ery numbers have been investigated extensively.
For example, Gessel \cite{Gessel} proved that
\[
A_{pn}\equiv A_n \pmod{n^3}
\]
for any $n\in\bZ^{+}$ and prime $p\geq 5$.
Xia and Sun \cite{XiaSun2022} confirmed the conjecture posed by Sun \cite{Sun2016} that
for each $v\in\bN$ and prime $p>3$, there is a $p$-adic integer $c_v$ such that
\[
\sum_{k=0}^{p-1}(2k+1)^{2v+1}(-1)^kA_k\equiv c_v p \left(\frac{p}{3}\right) \pmod {p^3},
\]
where $(-)$ denotes the Legendre symbol.
The authors \cite{WZ2024} derived that for each $v\in\bN$ and prime $p>3$, there exists an integer $\tilde{c}_v$ such that
\[
\sum_{k=0}^{p-1}(2k+1)^{2v+1}A_k\equiv \tilde{c}_v p\pmod {p^3}.
\]

In pursuing a more general study of arithmetic properties of Ap\'ery numbers,
Sun \cite[(1.1)]{Sun2012} introduced the Ap\'ery polynomials
\[
A_n(z)=\sum_{k=0}^{n}\binom{n}{k}^2\binom{n+k}{k}^2z^k, \quad n\in\bN.
\]
He proved that for any $n\in\bZ^{+}$,
\[
\sum_{k=0}^{n-1}(2k+1)A_k(z)\equiv 0\pmod n
\]
and conjectured that for any
$\varepsilon\in\{1,-1\},m,n\in\bZ^{+}$ and $z\in\bZ$,
\[
\sum_{k=0}^{n-1}(2k+1)\varepsilon^kA_k(z)^m\equiv 0\pmod n.
\]
Guo and Zeng \cite{GuoZeng2012} confirmed Sun's conjecture for the case $\varepsilon=-1,m=1$
%by showing that
%\begin{align*}
%&\frac{1}{n}\sum_{k=0}^{n-1}(2k+1)(-1)^kA_k(z)\\
%=&(-1)^{n-1}\sum_{j=0}^{n-1}\binom{2j}{j}z^j
%\sum_{k=0}^{j}\binom{j}{k}\binom{j+k}{k}
%\binom{n-1}{j+k}\binom{n+j+k}{j+k}.
%\end{align*}
%Guo and Zeng \cite{GuoZeng2012}
and further introduced the Schmidt polynomial
%$S_n^{(r)}(z)$, also called the generalized Ap\'ery polynomial in \cite{Pan2014},
\[
S_n^{(r)}(z)=\sum_{k=0}^{n}\binom{n}{k}^{r}\binom{n+k}{k}^{r}z^k,\quad n\in\bN,
\]
which is a natural generalization of the Delannoy polynomial and Ap\'ery polynomial.
Then they showed that, for any $n\in \bZ^{+},v\in\bN,r\ge 2$ and $z\in\bZ$,
\begin{equation*}\label{eqn:Guo--Zeng}
\sum_{k=0}^{n-1}\varepsilon^k(2k+1)^{2v+1}S^{(r)}_k(z)\equiv 0\pmod n,
\end{equation*}
and conjectured that, for any $m,n\in \bZ^{+},r\ge 2$ and $z\in\bZ$,
\begin{equation*}\label{eqn:Guo--Zeng conjecture}
\sum_{k=0}^{n-1}\varepsilon^k(2k+1)S^{(r)}_k(z)^m\equiv 0\pmod n.
\end{equation*}
This conjecture was confirmed by Pan \cite{Pan2014} with \emph{q}-congruences.
In 2018, Chen and Guo \cite{ChenGuo2018} defined the multi-variate Schmidt polynomial as
\[
S_k^{(r)}(x_0,x_1,\ldots,x_k)=\sum_{i=0}^{k}\binom{k+i}{2i}^r \binom{2i}{i}x_i,
\]
and proved that
\begin{equation*}\label{eq:multi-sch}
\sum_{k=0}^{n-1}\varepsilon^k(2k+1)^{2v+1}S_k^{(r)}(x_0,x_1,\ldots,x_k)^m\equiv 0 \pmod n.
\end{equation*}
All the mentioned congruences contain $(2k+1)^s$ with odd $s$.
The first part of this paper considers the arithmetic properties for sums of $(2k+1)^{s}D_k(z)$ with even $s$.
The following theorem illustrates that one can reduce the problem to the case of $s=0$.

%
%\begin{theo}\label{th:Delanoy even}
%Let $n\in\bZ^+$ and $\varepsilon\in\{1,-1\}$.
%For each $v\in\bN$ and $z\in\bZ\setminus\{0,-1\}$, we have
%\begin{equation}\label{eq:(2k+1)^{2r+2}}
%\sum_{k=0}^{n-1}(2k+1)^{2v}\varepsilon^k D_k(z)\equiv
%c_{v,\varepsilon} \sum_{k=0}^{n-1}\varepsilon^k D_k(z) \pmod {n}
%\end{equation}
%when $\gcd(n,2\varepsilon z+\varepsilon-1)=1$.
%Here $c_{0,\varepsilon}=1$ and when $v\geq 1$,
%\begin{equation}\label{eq:c_v}
%c_{v,\varepsilon}=\frac{1}{z}\sum_{j=1}^{v}e_j^{(2v-1)}c_{v-j,\varepsilon}\in z^{-v}\bZ[z]
%\text{ and }
%\tilde{c}_v=-\frac{1}{z+1}\sum_{j=1}^{v}e_j^{(2v-1)}\tilde{c}_{v-j}\in (z+1)^{-v}\bZ[z]
%\end{equation}
%are both independent of $n$ with
%\begin{equation}\label{eq:ejs}
%e^{(s)}_j=\binom{s}{2j}2^{2j-1}+\binom{s}{2j-1}2^{2j-2} \in\bZ
%\end{equation}
%for $j=1,2,\ldots,\left\lfloor \dfrac{s+1}{2}\right\rfloor$.
%\end{theo}

\begin{theo}\label{th:Delanoy even}
Let $n\in\bZ^+$. For each $v\in\bN$ and $z\in\bZ\setminus\{0,-1\}$, we have
\begin{equation}\label{eq:(2k+1)^{2r+2}}
\sum_{k=0}^{n-1}(2k+1)^{2v}D_k(z)\equiv
c_v \sum_{k=0}^{n-1}D_k(z) \pmod {n}
\end{equation}
when $\gcd(n,2z)=1$ and
\begin{equation}\label{eq:-(2k+1)^{2r+2}}
\sum_{k=0}^{n-1}(-1)^k(2k+1)^{2v}D_k(z)\equiv
\tilde{c}_v \sum_{k=0}^{n-1}(-1)^kD_k(z) \pmod {n}
\end{equation}
when $\gcd(n,2(z+1))=1$.
Here $c_0=\tilde{c}_0=1$ and when $v\geq 1$,
\begin{equation}\label{eq:c_v}
c_v=\frac{1}{z}\sum_{j=1}^{v}e_j^{(2v-1)}c_{v-j}\in z^{-v}\bZ[z]
\text{ and }
\tilde{c}_v=-\frac{1}{z+1}\sum_{j=1}^{v}e_j^{(2v-1)}\tilde{c}_{v-j}\in (z+1)^{-v}\bZ[z],
\end{equation}
%are both independent of $n$
where
\begin{equation}\label{eq:ejs}
e^{(s)}_j=\binom{s}{2j}2^{2j-1}+\binom{s}{2j-1}2^{2j-2} \in\bZ
\end{equation}
for $s\in\bN$ and $j=1,2,\ldots,\left\lfloor \dfrac{s+1}{2}\right\rfloor$.
\end{theo}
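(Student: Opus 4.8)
The plan is to prove the first congruence \eqref{eq:(2k+1)^{2r+2}} by induction on $v$ and then deduce the alternating one \eqref{eq:-(2k+1)^{2r+2}} from a symmetry of the Delannoy polynomials. The base case $v=0$ is immediate since $c_0=1$. For the inductive step the goal is a \emph{power-partible reduction} that lowers the exponent by two: an identity of the shape
\begin{equation*}
z(2k+1)^{2v}D_k(z)=\sum_{j=1}^{v}e_j^{(2v-1)}(2k+1)^{2(v-j)}D_k(z)+\big(G_{k+1,v}(z)-G_{k,v}(z)\big),
\end{equation*}
where $G_{k,v}(z)$ is an explicit certificate built from $D_k(z)$ and $D_{k-1}(z)$ with polynomial coefficients. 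Summing over $0\le k\le n-1$, the difference telescopes to the boundary $G_{n,v}(z)-G_{0,v}(z)$. If this boundary is divisible by $n$, then reducing mod $n$ and invoking the induction hypothesis $\sum_{k}(2k+1)^{2(v-j)}D_k(z)\equiv c_{v-j}\sum_k D_k(z)$ yields $z\sum_k(2k+1)^{2v}D_k(z)\equiv\big(\sum_{j=1}^v e_j^{(2v-1)}c_{v-j}\big)\sum_k D_k(z)=zc_v\sum_k D_k(z)$ by \eqref{eq:c_v}; since $\gcd(n,z)=1$ we may cancel $z$, which is exactly \eqref{eq:(2k+1)^{2r+2}}.

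To produce the reduction I would work from the order-two recurrence $kD_k(z)=(2z+1)(2k-1)D_{k-1}(z)-(k-1)D_{k-2}(z)$, which lets one rewrite any $\bZ[z][k]$-combination of $D_{k+1},D_k,D_{k-1}$ inside the module generated by $D_k(z)$ and $D_{k-1}(z)$. Applying the difference operator to a trial certificate $G_{k,v}=A_k(z)D_k(z)+B_k(z)D_{k-1}(z)$ and matching coefficients forces a first-order relation for $A_k,B_k$ solvable by Gosper-type reduction, and collecting the resulting coefficients of $(2k+1)^{2(v-j)}D_k(z)$ produces the integers $e_j^{(2v-1)}$. Their closed form is explained by the generating function
\begin{equation*}
\sum_{j\ge 1}e_j^{(s)}x^{2j}=\frac{(1+x)(1+2x)^s+(1-x)(1-2x)^s}{4}-\frac12,
\end{equation*}
whose $(1\pm 2x)^s$ factors reflect the shifts $(2k+1)\mapsto(2k+1)\pm2$ created when $G_{k,v}$ is advanced; in particular $e_1^{(s)}=s^2$, so $e_1^{(1)}=1$ and $c_1=1/z$. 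The parity in the exponent ($2v-1$ is odd) records that only even powers of $(2k+1)$ persist in the quotient, the odd powers being absorbed into the difference.

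The main obstacle is showing the boundary $G_{n,v}(z)-G_{0,v}(z)$ is divisible by $n$, since this is what upgrades the exact rational identity into the stated congruence. Here I would exploit the explicit form of the certificate together with the divisibility already visible in Sun's identity $\frac1n\sum_{k=0}^{n-1}(2k+1)D_k(z)=\sum_{k=0}^{n-1}\binom{n}{k+1}\binom{n+k}{k}z^k\in\bZ[z]$: upon evaluation at $k=n$ the coefficients of $G_{k,v}$ should inherit binomial factors $\binom{n}{\cdot}$ (equivalently an overall factor $n$), while the lower endpoint contributes only a controlled term. The hypothesis $\gcd(n,2z)=1$ enters twice: once to clear the powers of $2$ sitting in the denominators of the certificate, and once so that $z$ is invertible mod $n$, which is needed both for $c_v\in z^{-v}\bZ[z]$ to make sense mod $n$ and for the cancellation of $z$ in the inductive step.

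Finally, the alternating congruence \eqref{eq:-(2k+1)^{2r+2}} follows from \eqref{eq:(2k+1)^{2r+2}} with no new work, using the symmetry $D_k(-1-z)=(-1)^kD_k(z)$, which is checked directly from the defining sum. Replacing $z$ by $-1-z$ turns $\sum_{k}(-1)^k(2k+1)^{2v}D_k(z)$ into $\sum_k(2k+1)^{2v}D_k(-1-z)$, to which the first part applies whenever $\gcd(n,2(z+1))=1$; tracking this substitution through the recursion \eqref{eq:c_v} gives $\tilde c_v(z)=c_v(-1-z)\in(z+1)^{-v}\bZ[z]$, with the sign flip in \eqref{eq:c_v} produced by the factor $1/(-1-z)=-1/(z+1)$, completing the argument.
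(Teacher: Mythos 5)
Your skeleton is the paper's own power-partible reduction in disguise: the telescoped identity you posit is exactly what the operator calculus produces. With $L=(k+2)\sigma^2-(2k+3)(2z+1)\sigma+(k+1)\in\ann D_k(z)$ and $x_s(k)=(2k+3)^s$, the paper's Lemma \ref{lm:Structure} (at $\varepsilon=1$, $s=2v-1$) gives $z(2k+1)^{2v}=-\tfrac12 L^{\ast}(x_{2v-1}(k))+\sum_{j=1}^{v}e_j^{(2v-1)}(2k+1)^{2v-2j}$, and $L^{\ast}(x)\,D_k(z)$ is a telescoping difference with certificate built from $D_k$ and $D_{k+1}$ --- precisely your $G_{k,v}$. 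Your generating function for the $e_j^{(s)}$ is correct (it is a repackaging of the binomial expansion in Lemma \ref{lm:Structure}; in particular $e_1^{(s)}=s(s-1)+s=s^2$ checks out), and your inductive step, including the cancellation of $z$ modulo $n$, is the paper's.

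The one genuine gap is the boundary divisibility, which you correctly identify as the crux but then propose to obtain from Sun's identity and ``inherited binomial factors $\binom{n}{\cdot}$.'' That mechanism would not work. The certificate's value at the upper limit is $u_0(n)D_n(z)+u_1(n)D_{n+1}(z)$ with $u_0(n)=nx(n-2)-(2n+1)(2z+1)x(n-1)$ and $u_1(n)=(n+1)x(n-1)$: neither coefficient is divisible by $n$, no binomial factor of the form $\binom{n}{\cdot}$ appears, and Sun's identity for $\sum_{k}(2k+1)D_k(z)$ gives no information about these two point evaluations. The factor $n$ emerges only after combining the two boundary terms through the recurrence $(n+1)D_{n+1}(z)=(2n+1)(2z+1)D_n(z)-nD_{n-1}(z)$, whereupon the non-$n$-divisible pieces cancel, leaving exactly $n\bigl(x(n-1)D_{n-1}(z)-x(n-2)D_n(z)\bigr)$; at the lower limit the boundary vanishes identically because $D_1(z)=(2z+1)D_0(z)$. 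This is the content of the paper's Theorem \ref{th:Apery cong} and Corollary \ref{cor:divisibility-L*}, an exact identity valid for \emph{every} polynomial $x(k)$ --- so your ``main obstacle'' has a two-line resolution, but the resolution is structurally different from the one you sketch, and as written your argument does not contain it.

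One place where you genuinely improve on the paper: deducing the alternating congruence from the symmetry $D_k(-1-z)=(-1)^kD_k(z)$ is correct (it follows from $D_k(z)=T_k(2z+1,z^2+z)$ together with $T_k(-b,c)=(-1)^kT_k(b,c)$, or by induction on the three-term recurrence), and the bookkeeping $\tilde{c}_v(z)=c_v(-1-z)$, the sign coming from $1/(-1-z)=-1/(z+1)$, and the translation of the hypothesis $\gcd(n,2(z+1))=1$ all check out. The paper instead reruns the whole reduction with $\varepsilon=-1$ in Lemma \ref{lm:Structure}; your route gets the second congruence from the first with no new computation, at the small cost of having to verify the symmetry once.
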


%\begin{theo}\label{th:Delanoy even}
%Suppose $n\in\bZ^{+}$ and $\gcd(n,2z)=1$. Then for each $v\in\bN$, there is a constant $c_v\in\bQ(z)$ such that
%\begin{equation}\label{2q:(2k+1)^{2r+2}}
%\sum_{k=0}^{n-1}(2k+1)^{2v+2}D_k(z)\equiv
%c_v \sum_{k=0}^{n-1}D_k(z) \pmod {n}.
%\end{equation}
%\end{theo}
%\begin{theo}\label{th:Delanoy even-1}
%Suppose $n\in\bZ^{+}$ and $\gcd(n,2(z+1))=1$. Then for each $v\in\bN$, there is a constant $\tilde{c}_v\in\bQ(z)$ such that
%\begin{equation}\label{2q:(2k+1)^{2r+2}}
%\sum_{k=0}^{n-1}(-1)^k(2k+1)^{2v+2}D_k(z)\equiv
%\tilde{c}_v \sum_{k=0}^{n-1}D_k(z) \pmod {n}.
%\end{equation}
%\end{theo}
In 2014, Sun \cite[Theorem 1.2(i)]{Sun2014} proved that when $p$ is an odd  prime and $b,c\in\bZ$, for any integer $m$ coprime with $p$, we have
\begin{equation}\label{eq:Tk(b,c)}
\sum_{k=0}^{p-1}\frac{T_k(b,c)}{m^k}
\equiv\left(\frac{(m-b)^2-4c}{p}\right) \pmod p.
\end{equation}
Here $(-)$ denotes the Legendre symbol and $T_n(b,c)$ is the generalized central trinomial coefficients defined by
\[
T_n(b,c)=\sum_{k=0}^{\lfloor n/2\rfloor}
\binom{n}{2k}\binom{2k}{k}b^{n-2k}c^k.
\]
Sun \cite{Sun2014} showed that $T_k(b,c)=D_k(z)$ if $b=2z+1$ and $c=z^2+z$ for any $z\in\bZ$.
Then taking $m=1$ and $m=-1$ respectively in \eqref{eq:Tk(b,c)} gives
\begin{equation}\label{eq:(2k+1)^0}
\sum_{k=0}^{p-1} D_k(z)\equiv \left(\frac{-z}{p}\right) \pmod p
\end{equation}
and
\begin{equation}\label{eq:-(2k+1)^0}
\sum_{k=0}^{p-1} (-1)^k D_k(z)\equiv \left(\frac{z+1}{p}\right) \pmod p.
\end{equation}
Combining Theorem \ref{th:Delanoy even} and congruences \eqref{eq:(2k+1)^0} and \eqref{eq:-(2k+1)^0} together leads to the following.

\begin{theo}\label{th:Maineven}
Let $p$ be an odd prime.
For each $v\in\bN$ and $z\in\bZ\setminus\{0,-1\}$, we obtain that
\begin{equation*}
\sum_{k=0}^{p-1}(2k+1)^{2v}D_k(z)\equiv
c_v \left(\frac{-z}{p}\right) \pmod {p}
\end{equation*}
when $\gcd(p,z)=1$, and that
\begin{equation*}
\sum_{k=0}^{p-1}(-1)^k(2k+1)^{2v}D_k(z)\equiv
\tilde{c}_v \left(\frac{z+1}{p}\right) \pmod {p}
\end{equation*}
when $\gcd(p,z+1)=1$.
Here $c_v$ and $\tilde{c}_v$ are given as in \eqref{eq:c_v}.
\end{theo}

\re
The cases for $z\in\{0,-1\}$ are trivial since $D_k(0)=1$ and $D_k(-1)=(-1)^k$.
%
%\begin{theo}\label{th:Maineven}
%Let $p$ be a prime and $\gcd(p,2z)=1$. Then for each $v\in\bN$, there exists $c_v\in\bQ(z)$  such that
%\begin{equation*}
%\sum_{k=0}^{p-1}(2k+1)^{2v+2}D_k(z)\equiv
%c_v \left(\frac{-z}{p}\right) \pmod {p}.
%\end{equation*}
%\end{theo}
%
%\begin{theo}\label{th:Maineven}
%Let $p$ be a prime and $\gcd(p,2(z+1))=1$.
%Then for each $v\in\bN$, there exists $\tilde{c}_v\in\bQ(z)$  such that
%\begin{equation*}
%\sum_{k=0}^{p-1}(-1)^k(2k+1)^{2v+2}D_k(z)\equiv
%\tilde{c}_v \left(\frac{z+1}{p}\right) \pmod {p}.
%\end{equation*}
%\end{theo}

The second part of the paper concentrates on supercongruences of sums involving the Delannoy numbers $D_k$.

\begin{theo}\label{th:1n^3}
Let $v\in\bN$ and $n=2^a$ for some $a\in\bZ^+$.
We have
\begin{equation}\label{eq:1main2}
\sum_{k=0}^{n-1}(2k+1)^{2v+1}D_k\equiv \rho_v n \pmod {n^3}
\end{equation}
and
\begin{equation}\label{eq:-1main2}
\sum_{k=0}^{n-1}(-1)^k(2k+1)^{2v+1}D_k\equiv \tilde{\rho}_v n^2 \pmod {n^3}.
\end{equation}
Here $\rho_v$ is an odd integer given by
\begin{equation}\label{eq:pv}
\rho_v=-2\sum_{j=1}^{v}e_j^{(2v)} y_{v-j}(-1)+1,
\end{equation}
where
$y_0(k)=-\frac{1}{2}$ and
\begin{equation}\label{eq:yvk0}
y_v(k)=-\frac{1}{2}(2k+3)^{2v}+\sum_{j=1}^{v}e_j^{(2v)}y_{v-j}(k),\quad v\geq 1.
\end{equation}
While $\tilde{\rho}_v$ is an even integer given by
\begin{equation}\label{eq:pv2}
\tilde{\rho}_v=2v-\sum_{j=1}^{v}e^{(2v)}_j
\tilde{y}_{v-j}'(-1)\in\bZ,
\end{equation}
where $\tilde{y}_0(k)=\frac{1}{4}$ and when $v\geq 1$
\begin{equation}\label{eq:yvk}
\tilde{y}_v(k)=\frac{(2k+3)^{2v}}{4}-\sum_{j=1}^{v}\frac{e^{(2v)}_j}{2}\tilde{y}_{v-j}(k).
\end{equation}
Here $e^{(2v)}_j$ are even integers given by \eqref{eq:ejs}.
\end{theo}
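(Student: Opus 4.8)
The plan is to prove both congruences by a \emph{power-partible reduction} of the summand against the three-term recurrence for the central Delannoy numbers, and then to extract the boundary data by a $2$-adic analysis that uses the hypothesis $n=2^a$ in an essential way. Write $\Delta_k f(k)=f(k+1)-f(k)$ and recall that $D_k=D_k(1)$ satisfies $(k+1)D_{k+1}=3(2k+1)D_k-kD_{k-1}$, so every expression $P(k)D_k$ with $P\in\bQ[k]$ lives in the free $\bQ[k]$-module generated by $D_k$ and $D_{k-1}$. For each $v\ge 1$ I would produce an exact reduction identity of the shape
\begin{equation*}
(2k+1)^{2v+1}D_k=\Delta_k\!\bigl[Y_v(k)\bigr]+\sum_{j=1}^{v}e_j^{(2v)}(2k+1)^{2(v-j)+1}D_k,
\end{equation*}
where $Y_v(k)=p_v(k)D_k+q_v(k)D_{k-1}$ is a certificate with $p_v,q_v\in\bQ[k]$. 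The even-superscript symbols $e_j^{(2v)}$ appear here for the same structural reason the odd-superscript ones appear in Theorem \ref{th:Delanoy even}: expanding $(2k+3)^{2v}$ about the midpoint $2k+2$ produces exactly the combination $\binom{2v}{2j}2^{2j-1}+\binom{2v}{2j-1}2^{2j-2}$ of \eqref{eq:ejs}. Summing over $0\le k\le n-1$ and setting $S_v:=\sum_{k=0}^{n-1}(2k+1)^{2v+1}D_k$ collapses the telescoping part and yields the recursion $S_v=B_v+\sum_{j=1}^{v}e_j^{(2v)}S_{v-j}$, with $B_v=Y_v(n)-Y_v(0)$. The auxiliary polynomials $y_v(k)$ of \eqref{eq:yvk0} package this recursion: evaluating the $y_v$-recursion at $k=-1$ gives the compact identity $\rho_v=-2\,y_v(-1)$, which is precisely \eqref{eq:pv} after one unfolding.

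For \eqref{eq:1main2} the crux is to show that the boundary term stabilises, namely $B_v\equiv n\pmod{n^3}$ \emph{uniformly in} $v$ when $n=2^a$. This rests on the supercongruences
\begin{equation*}
D_{n}\equiv 1,\qquad D_{n-1}\equiv -1 \pmod{n^{3}}\qquad(a\ge 2),
\end{equation*}
which I would prove by induction on $a$, driving the induction with the three-term recurrence together with Kummer-type $2$-adic valuations of the binomials in $D_n=\sum_k\binom{n+k}{2k}\binom{2k}{k}$ (the single case $a=1$ being checked by hand). These collapse $Y_v(n)=p_v(n)D_n+q_v(n)D_{n-1}$ to $p_v(n)-q_v(n)\pmod{n^3}$, after which the certificate's polynomial structure delivers $B_v\equiv n$. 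The base case is $S_0\equiv n\pmod{n^3}$, equivalent through Sun's identity to $\sum_{k=0}^{n-1}\binom{n}{k+1}\binom{n+k}{k}\equiv 1\pmod{n^2}$, a concrete $2$-adic statement provable along the same lines. Feeding $B_v\equiv n$ and the inductive hypothesis $S_{v-j}\equiv\rho_{v-j}n$ into the recursion gives $\rho_v=1+\sum_{j=1}^{v}e_j^{(2v)}\rho_{v-j}$; since every $e_j^{(2v)}$ with $j\ge 1$ is even (the factor $2^{2j-1}$ handles the first summand and the factor $2v$ the case $j=1$ of the second), an immediate parity induction forces $\rho_v$ to be odd.

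The alternating congruence \eqref{eq:-1main2} runs in parallel on $(-1)^k(2k+1)^{2v+1}D_k$, whose reduction produces the companion certificate $\tilde Y_v$ with polynomial part governed by $\tilde y_v(k)$ of \eqref{eq:yvk}. The decisive difference is that the constant part of the upper boundary now cancels, so the surviving contribution is one order higher in $n$; concretely one finds the boundary term is $\equiv 2v\,n^2\pmod{n^3}$, which is exactly why $\tilde\rho_v$ is expressed through the \emph{derivative} $\tilde y_v'(-1)$ in \eqref{eq:pv2} and why the right-hand side is $\tilde\rho_v n^2$ rather than of order $n$. The analogue of the previous step reads $\tilde\rho_v=2v-\tfrac12\sum_{j=1}^{v}e_j^{(2v)}\tilde\rho_{v-j}$, whence $\tilde\rho_v=2\tilde y_v'(-1)$; evenness of $\tilde\rho_v$ follows again from the evenness of $e_j^{(2v)}$ together with the explicit term $2v$. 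Specialising to $v=1$ recovers $\sum_{k=0}^{n-1}(-1)^k(2k+1)^3D_k\equiv 2n^2\pmod{n^3}$, which is the conjecture of Guo and Zeng.

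I expect the main obstacle to be the $2$-adic step: establishing $D_{2^a}\equiv 1$ and $D_{2^a-1}\equiv -1\pmod{2^{3a}}$ to the full modulus, and thereby controlling the boundary certificate $Y_v(n)$ (respectively $\tilde Y_v(n)$) modulo $n^3$ \emph{uniformly} in $v$. This is the precise point at which the hypothesis $n=2^a$ is indispensable—over a general modulus the Delannoy numbers do not enjoy this rigidity, and indeed the clean congruences fail already at $a=1$—so the interplay between the polynomial certificate and the $2$-adic valuations of $D_n,D_{n-1}$ must be tracked carefully, with $a=1$ absorbed as a separate base case.
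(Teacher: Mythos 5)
Your reduction skeleton is exactly the paper's own: the certificate you posit is the adjoint identity behind the power-partible reduction, and your recursion $S_v=B_v+\sum_{j=1}^{v}e_j^{(2v)}S_{v-j}$, the evaluations $\rho_v=-2y_v(-1)$ and $\tilde\rho_v=2\tilde y_v'(-1)$, and both parity arguments agree with the paper's proof. The genuine gap is precisely where you flag it: the $2$-adic input. You assert $D_{2^a}\equiv 1$ and $D_{2^a-1}\equiv -1\pmod{2^{3a}}$ for $a\ge 2$ and propose to prove them by induction on $a$ via the three-term recurrence plus Kummer-type valuations, but no mechanism is given --- the recurrence ties $D_{k+1}$ to $D_k,D_{k-1}$ and offers no passage from index $2^a$ to $2^{a+1}$ --- and these congruences are strictly stronger than anything available: Lengyel's theorem \eqref{eq:D_{2^a}} gives $D_{2^a}\equiv 1$ only modulo $4^{a+1}=2^{2a+2}$, and numerically $v_2(D_{2^a}-1)=3a$ exactly (e.g.\ $D_8-1=2^9\cdot 519$), so your lemma sits right at the edge and would need a genuinely new proof. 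The paper never proves it; instead it proves the weaker $D_{2^a-1}\equiv-1\pmod{4^{a+1}}$, not inductively, but by a term-by-term $2$-adic valuation analysis of $D_n=\sum_k\binom{n}{k}^2 2^k$ applied to $D_{2^a+1}$, combined with the Schr\"oder identity \eqref{eq:DnSn} and Lengyel's congruence \eqref{eq:S_{2^a}}.

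Moreover, the mod-$n^3$ strength you demand is unnecessary, and seeing this is what lets the paper close the argument with available tools. The exact boundary formula \eqref{eq: D1} carries an explicit factor of $n$: $\sum_{k=0}^{n-1}L^{\ast}(x(k))F_k(z)=n\bigl(x(n-1)F_{n-1}(z)-x(n-2)F_n(z)\bigr)$. Using the symmetry $y_v(k-1)=y_v(-k-2)$ to write $y_v(n-1)=\sum_{j\ge 1}s_j^{(v)}n^j+\tfrac{s_0^{(v)}}{2}$ and $y_v(n-2)=\sum_{j\ge 1}s_j^{(v)}(-n)^j+\tfrac{s_0^{(v)}}{2}$, the sum collapses modulo $n^3$ to $n\bigl(s_1^{(v)}n(D_n+D_{n-1})-\tfrac{s_0^{(v)}}{2}(D_n-D_{n-1})\bigr)$, so all that is needed is $D_n-D_{n-1}\equiv 2$ and $D_n+D_{n-1}\equiv 0\pmod{4n^2}$ --- exactly the strength of \eqref{eq:D_{2^a}} and \eqref{eq:D_{2^{a}-1}}; the alternating case is identical with the roles of the sum and difference exchanged, which is also why your boundary evaluation $\tilde B_v\equiv 2vn^2\pmod{n^3}$ is correct once those two congruences are in hand. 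If you replace your unproven mod-$2^{3a}$ claims by these mod-$4^{a+1}$ congruences (one cited from Lengyel, one established as in the paper's Theorem 3.2), your outline becomes a complete proof; as written, the decisive $2$-adic step is asserted rather than proved, and the proposed route to it does not look workable.
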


In particular, taking $v=1$ in \eqref{eq:-1main2} confirms Guo and Zeng's \cite{GuoZeng2012a} conjecture: when $n$ is a power of $2$, one has
\begin{equation}\label{eq:GZconj}
\sum_{k=0}^{n-1}(-1)^k(2k+1)^3D_k\equiv 2n^2 \pmod {n^3}.
\end{equation}

The rest of the paper is organized as follows.
In Section 2, we recall the power-partible reduction and apply it to the proof of Theorem \ref{th:Delanoy even}.
The proof of Theorem \ref{th:1n^3} and arithmetic properties of $D_{2^a\pm1}$ needed in the proof will be given in Section 3.

\section{Power-partible reduction and congruences on Delannoy polynomials}
In this section, we first recall some basic notations and facts concerning power-partible reduction, which was introduced by Hou, Mu and Zeilberger \cite{HouMuZeil2021} for hypergeometric terms and extended to holonomic sequences by the authors in \cite{WZ2022,WZ2024}.
Then the proof of Theorem \ref{th:Delanoy even} along with other arithmetic properties for $D_k(z)$ will be presented.

\subsection{Power-partible reduction}

Let $\bK$ be a field of characteristic $0$ and $\bK[k]$ the polynomial ring over $\bK$.
The set of \emph{annihilators} of a sequence $F(k)$ is denoted by
\begin{equation*}\label{eq:Annihilator}
\ann F(k):=\left\{L=\sum_{i=0}^{J}a_i(k)\sigma^i\in \bK[k][\sigma]\mid L(F(k))=0\right\},
\end{equation*}
where $\sigma$ is the shift operator (that is, $\sigma F(k)=F(k+1)$) and $J\in\bN=\{0,1,2,\ldots\}$ is called the order of the operator $L$ if $a_J(k)\neq 0$.
A sequence $F(k)$ is called \emph{holonomic} if $\ann F(k)\neq \{0\}$.
Given an operator $L=\sum\limits_{i=0}^{J}a_i(k)\sigma^i\in \bK[k][\sigma]$ of order $J$,
the \emph{adjoint} of $L$ is defined by
$
L^{\ast}=\sum_{i=0}^{J}\sigma^{-i}a_i(k),
$
that is,
\[
L^{\ast}(x(k))=\sum_{i=0}^{J}a_i(k-i)x(k-i)
\]
for any $x(k)\in\bK[k]$.
Suppose $L=\sum_{i=0}^{J}a_i(k)\sigma^i\in\ann F(k)$ and $x(k)\in\bK[k]$.
By \cite{ChenHouJin2012} or \cite{Hoeven2018}, one has
\begin{equation}\label{eq: summable}	L^{\ast}(x(k))F(k)=\Delta\left(-\sum_{i=0}^{J-1}u_i(k)F(k+i)\right),
\end{equation}
where
$
u_i(k)=\sum_{j=1}^{J-i}a_{i+j}(k-j)x(k-j),\ i=0,1,2,\ldots,J-1.
$
Summing over $k$ from $0$ to $n-1$ on both sides of identity \eqref{eq: summable} leads to
\begin{equation}\label{eq:rec ann}
\sum_{k=0}^{n-1}L^{\ast}(x(k)) F(k)
=\left(\sum_{i=0}^{J-1}u_i(0)F(i)\right)
-\left(\sum_{i=0}^{J-1}u_i(n)F(n+i)\right).
\end{equation}

The set
\begin{equation*}\label{eq:SL}
S_{L}=\{L^{\ast}(x(k))\mid x(k)\in\bK[k]\}
\end{equation*}
is called the \emph{difference space} corresponding to $L$.
We will take $[p(k)]_L=p(k)+S_L$ as the coset of a polynomial $p(k)$.
The \emph{degree} of $L=\sum\limits_{i=0}^{J}a_i(k)\sigma^i\in \bK[k][\sigma]$ with $a_J(k)\neq 0$ is defined by
\begin{equation*}\label{eq:d and bk}
\deg L=\max_{0\leq \ell \leq J} \{\deg b_\ell(k)-\ell\},
\end{equation*}
where $b_{\ell}(k)=\sum_{j={\ell}}^{J}\binom{j}{\ell}a_{J-j}(k+j-J)$.
Let $d=\deg L$ and
\begin{equation*}\label{eq:nonnegative roots}
R_{L}=\left\{s\in\bN \mid \sum_{\ell=0}^{J}[k^{d+\ell}](b_\ell(k))s^{\underline{\ell}}=0\right\}.
\end{equation*}
Here $[k^{d+\ell}](b_\ell(k))$ denotes the coefficient of $k^{d+\ell}$ in $b_\ell(k)$ and $s^{\underline{\ell}}$ denotes the falling factorial $s(s-1)\cdots(s-\ell+1)$.
Then $L$ is called \emph{nondegenerated} if $R_{L}=\emptyset$, and \emph{degenerated} otherwise.
%The degree of $\deg L^{\ast}(x(k))$ can be determined by the following Lemma.
%\begin{lem}[Lemma 2.5 in \cite{WZ2022}]\label{lem:degree}
%Let $L=\sum\limits_{i=0}^{J}a_i(k)\sigma^i\in\bK[k][\sigma]$ with $a_J(k)\neq 0$ be nondegenerated and $d=\deg(L)$.
%Then for any nonzero polynomial $x(k)\in\bK[k]$, we have
%\begin{equation*}
%\deg L^{\ast}(x(k))=d+\deg x(k).
%\end{equation*}
%\end{lem}
%
%By Lemma \ref{lem:degree}, if $L$ is nodegenerated, then for each $s\in \bN$, there exists a $q_s(k)\in S_L$ with $\deg q_s(k)=d+s$.
%Then using the Euclidean division algorithm, one can write any polynomial $Q(k)\in\bK[k]$ of degree $m$ as
%\begin{equation}\label{eq:reduction step}
% Q(k)=\sum_{s=0}^{m-d}u_{s}q_{s}(k)+\tilde{q}(k),
%\end{equation}
%where $u_s\in\bK$ and $\deg \tilde{q}(k)<d$.
%Eq.~\eqref{eq:reduction step} is called a \emph{polynomial reduction} on $Q(k)$ with respect to $L$.

\begin{theo}[Theorem 2.4 in \cite{WZ2024}]\label{thm:power-partible}
Let $L=\sum_{i=0}^{J}a_i(k)\sigma^i\in \bK[k][\sigma]$ with $a_J(k)\neq 0$.
Suppose $L$ is nondegenerated and there exists a $\gamma\in\bK$ such that
\begin{equation}\label{eq:power-reduce-condition}
a_i(\gamma+k)=(-1)^{\deg L} a_{J-i}(\gamma-k-J),\quad i=0,1,\ldots,\lfloor\frac{J}{2}\rfloor.
\end{equation}
Then for any positive integer $m$, we have
\begin{equation*}
  [(k-\gamma)^m]_L\in\la [(k-\gamma)^i]_L\mid i\equiv m \pmod 2, 0\leq i<\deg L\ra.
\end{equation*}
\end{theo}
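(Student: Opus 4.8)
The plan is to read hypothesis \eqref{eq:power-reduce-condition} as a reflection symmetry of $L$ about the point $\gamma$, and to show that this symmetry forces the entire reduction procedure to respect the parity of exponents. Write $d=\deg L$ and work throughout with the recentered monomials $(k-\gamma)^i$.

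I would first record the ``dimension count'' underlying every power-partible reduction. A standard top-degree computation in this framework shows that for $s\in\bN$ the polynomial $L^{\ast}\bigl((k-\gamma)^s\bigr)$ has degree at most $s+d$, and that its coefficient of $k^{s+d}$ equals the indicial value $\chi(s):=\sum_{\ell=0}^{J}[k^{d+\ell}]\bigl(b_\ell(k)\bigr)\,s^{\underline{\ell}}$. Since $L$ is nondegenerated we have $R_L=\emptyset$, i.e.\ $\chi(s)\neq 0$ for every $s\in\bN$; hence $L^{\ast}\bigl((k-\gamma)^s\bigr)$ has degree \emph{exactly} $s+d$. Consequently, given any $p(k)$ of degree $t\geq d$, subtracting a suitable scalar multiple of $L^{\ast}\bigl((k-\gamma)^{t-d}\bigr)\in S_L$ strictly lowers its degree; iterating shows that $[p(k)]_L$ is a $\bK$-linear combination of the cosets $[(k-\gamma)^i]_L$ with $0\leq i<d$. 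In particular these $d$ cosets span the quotient $V:=\bK[k]/S_L$.

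The heart of the matter is the reflection, and this is the step I expect to be the main obstacle. Let $\rho$ be the involution of $\bK[k]$ given by $(\rho\,p)(k)=p(2\gamma-k)$, so that $\rho\bigl((k-\gamma)^i\bigr)=(-1)^i(k-\gamma)^i$. I claim that \eqref{eq:power-reduce-condition} yields $\rho\bigl(L^{\ast}(x)\bigr)=(-1)^{d}L^{\ast}(\tilde x)$, where $\tilde x(u):=x(2\gamma-J-u)$. The verification is pure bookkeeping: starting from $\bigl(\rho\,L^{\ast}(x)\bigr)(k)=\sum_{i=0}^{J}a_i(2\gamma-k-i)\,x(2\gamma-k-i)$, evaluate \eqref{eq:power-reduce-condition} at $\gamma-k-i$ in place of $k$ to obtain $a_i(2\gamma-k-i)=(-1)^{d}a_{J-i}\bigl(k-(J-i)\bigr)$, and then reindex $i\mapsto J-i$ to recognize the sum as $(-1)^{d}L^{\ast}(\tilde x)(k)$. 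Since $x\mapsto\tilde x$ is a bijection of $\bK[k]$ and multiplication by the scalar $(-1)^{d}$ preserves any subspace, this gives $\rho(S_L)=S_L$.

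It remains to combine the two ingredients by elementary linear algebra over $\bK$, using only that $\bK$ has characteristic $0$. As $S_L$ is $\rho$-invariant, $\rho$ descends to an involution $\bar\rho$ of $V$, and $V=V_{+}\oplus V_{-}$ splits into its $(\pm1)$-eigenspaces. Each spanning coset $[(k-\gamma)^i]_L$ lies in $V_{(-1)^i}$, so the even-indexed ones span a subspace $W_{+}\subseteq V_{+}$ and the odd-indexed ones a subspace $W_{-}\subseteq V_{-}$; from $W_{+}+W_{-}=V=V_{+}\oplus V_{-}$ together with $V_{+}\cap V_{-}=0$ one concludes $W_{\pm}=V_{\pm}$. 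For a fixed $m$, the coset $[(k-\gamma)^m]_L$ lies in $V_{(-1)^m}=W_{(-1)^m}=\la [(k-\gamma)^i]_L \mid i\equiv m \pmod 2,\ 0\leq i<d\ra$, which is exactly the asserted containment.
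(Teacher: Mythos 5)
Your proposal is correct, but note that the paper contains no proof to compare against: Theorem \ref{thm:power-partible} is imported verbatim from \cite{WZ2024} (Theorem 2.4 there), so the relevant comparison is with that source's method, which is also the one instantiated concretely in this paper's Lemma \ref{lm:Structure}. Both pillars of your argument check out. Nondegeneracy ($R_L=\emptyset$) does give $\deg L^{\ast}\bigl((k-\gamma)^s\bigr)=s+d$ exactly, with leading coefficient $\chi(s)\neq 0$, via the Newton expansion $L^{\ast}(x)(k)=\sum_{\ell=0}^{J}b_\ell(k)\,(\Delta^\ell x)(k-J)$, whence the cosets $[(k-\gamma)^i]_L$, $0\leq i<d$, span $\bK[k]/S_L$; and your identity $\rho\bigl(L^{\ast}(x)\bigr)=(-1)^{d}L^{\ast}(\tilde x)$ with $\tilde x(u)=x(2\gamma-J-u)$ is verified by exactly the substitution you indicate, with $x\mapsto\tilde x$ an involution, so $\rho(S_L)=S_L$ and the eigenspace argument closes the proof. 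One step you should make explicit: hypothesis \eqref{eq:power-reduce-condition} is stated only for $i\leq\lfloor\tfrac{J}{2}\rfloor$, while your reindexed sum uses $a_i(2\gamma-k-i)=(-1)^{d}a_{J-i}\bigl(k-(J-i)\bigr)$ for all $0\leq i\leq J$. This is harmless--substituting $k\mapsto -k-J$ in the identity at index $i$ yields the identity at index $J-i$, so the given range already forces all indices--but the one-line extension belongs in the proof rather than being absorbed silently into the reindexing.

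Methodologically you take a genuinely different route from \cite{WZ2024}. There the parity statement is obtained constructively: one applies $L^{\ast}$ to powers centered at $\gamma-\tfrac{J}{2}$ and checks by direct binomial expansion that the output is a combination of $(k-\gamma)^{s+d-2j}$, then inducts on $m$, using nondegeneracy to solve for the top term at each step; Lemma \ref{lm:Structure} is precisely this computation for the Delannoy operator, where $\gamma=-\tfrac{1}{2}$, $J=2$, and $x_s(k)=(2k+3)^s=2^s\bigl(k-\gamma+\tfrac{J}{2}\bigr)^s$. Your argument replaces this bookkeeping with the abstract invariance $\rho(S_L)=S_L$ and the splitting $V=V_{+}\oplus V_{-}$, concluding $W_{\pm}=V_{\pm}$ by linear algebra. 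Your packaging is cleaner and marginally stronger--it shows the even- and odd-indexed cosets span the full $(\pm1)$-eigenspaces, not merely that reduction preserves parity--and it remains algorithmic, since each reduction step is an explicit leading-term subtraction. What the constructive proof buys instead is the closed-form coefficient data: the applications in Sections 2 and 3 rest on the explicit integers $e^{(s)}_j$ of \eqref{eq:ejs} appearing in \eqref{eq:c_v}, \eqref{eq:yvk0} and \eqref{eq:yvk}, which the expansion in Lemma \ref{lm:Structure} delivers directly and your eigenspace argument would have to recompute separately.
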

When conditions in Theorem \ref{thm:power-partible} are satisfied, we say $L$ is \emph{power-partible} with respect to $\gamma$.
If $L\in\ann F(k)$ for some holonomic sequence $F(k)$, one may also say $F(k)$ is \emph{power-partible} with respect to $\gamma$.

%
%When $L=\sum\limits_{i=0}^{J}a_i(k)\sigma^i\in \bK[k][\sigma]$ with $a_J(k)\neq 0$ is nondegenerated and there exists a $\gamma\in\bK$ such that
%\begin{equation}\label{eq:power-reduce-condition}
%a_i(\gamma+k)=(-1)^{\deg L} a_{J-i}(\gamma-k-J),\quad i=0,1,\ldots,\lfloor\frac{J}{2}\rfloor,
%\end{equation}
%then $L$ is called \emph{power-partible} with respect to $\gamma$ \cite{WZ2024+}.
%If $L\in\ann F(k)$, we also say $F(k)$ is power-partible with respect to $\gamma$.
%By \cite[Theorem 2.4]{WZ2024+}, when $L$ is power-partible with respect to $\gamma$.
%Then for any positive integer $m$, we have
%\begin{equation*}
%  [(k-\gamma)^m]_L\in\la [(k-\gamma)^i]_L\mid i\equiv m \pmod 2, 0\leq i<\deg L\ra.
%\end{equation*}

\subsection{Arithmetic properties of Delannoy polynomials}

In this subsection, we utilize the power-partible reduction to prove Theorem \ref{th:Delanoy even}. We will assume $z\in\bZ\setminus\{0,-1\}$ in the following text.

Let $\varepsilon\in\{1,-1\}$ and $F_k(z)=\varepsilon^k D_k(z)$.
By Zeilberger's algorithm \cite{Zeilberger1991}, we find that
\begin{equation}\label{eq:L Delannoy}
L=(k+2)\sigma^2-\varepsilon(2k+3)(2z+1)\sigma+(k+1)\in \ann F_k(z).
\end{equation}
It is straightforward to check that $L$ is nondegenerated, $d=\deg(L)=1$ and condition \eqref{eq:power-reduce-condition} holds for $\gamma=-\frac{1}{2}$.
Namely, $F_k(z)$ is power-partible with respect to $\gamma$.
Thus for any $v\in\bN$, it follows that
\begin{equation*}\label{eq:Delannoy (2k+1)}
[(2k+1)^{2v}]_L\in\la[(2k+1)^0]_L\ra \quad \text{and} \quad [(2k+1)^{2v+1}]_L\in\la[0]_L\ra.
\end{equation*}

\begin{theo}\label{th:Apery cong}
Let $L$ be given as in \eqref{eq:L Delannoy} and $n\in\bZ^{+}$.
For any polynomial $x(k)$, we have
\begin{equation}\label{eq: D1}
\sum_{k=0}^{n-1} L^{\ast}(x(k))F_k(z)
=n(x(n-1)F_{n-1}(z)-x(n-2)F_n(z)).
\end{equation}
\end{theo}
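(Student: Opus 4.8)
The plan is to apply the general creative-telescoping machinery of \eqref{eq: summable}--\eqref{eq:rec ann} to the specific second-order operator $L=(k+2)\sigma^2-\varepsilon(2k+3)(2z+1)\sigma+(k+1)$ from \eqref{eq:L Delannoy}, and then to simplify the resulting boundary terms using the explicit coefficients of $L$. Here $J=2$, so $a_0(k)=k+1$, $a_1(k)=-\varepsilon(2k+3)(2z+1)$, $a_2(k)=k+2$, and the formula \eqref{eq:rec ann} specializes to
\begin{equation*}
\sum_{k=0}^{n-1}L^{\ast}(x(k))F_k(z)=\bigl(u_0(0)F_0+u_1(0)F_1\bigr)-\bigl(u_0(n)F_n+u_1(n)F_{n+1}\bigr),
\end{equation*}
where the certificates are $u_0(k)=\sum_{j=1}^{2}a_j(k-j)x(k-j)=a_1(k-1)x(k-1)+a_2(k-2)x(k-2)$ and $u_1(k)=a_2(k-1)x(k-1)$.

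First I would write out $u_0$ and $u_1$ explicitly. Using $a_2(k-2)=k$ and $a_2(k-1)=k+1$, one gets $u_1(k)=(k+1)x(k-1)$ and $u_0(k)=-\varepsilon(2k+1)(2z+1)x(k-1)+k\,x(k-2)$. Evaluating at $k=0$ gives $u_1(0)=x(-1)$ and $u_0(0)=-\varepsilon(2z+1)x(-1)$, so the lower boundary contributes $F_0\,u_0(0)+F_1\,u_1(0)=x(-1)\bigl(-\varepsilon(2z+1)F_0+F_1\bigr)$. Here I would invoke the recurrence $L(F_k)=0$ at $k=-1$, i.e. $a_2(-1)F_1+a_1(-1)F_0+a_0(-1)F_{-1}=0$; since $a_2(-1)=1$, $a_1(-1)=-\varepsilon(2z+1)$ and $a_0(-1)=0$, this reads $F_1-\varepsilon(2z+1)F_0=0$, so the entire lower boundary term vanishes. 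This is the key simplification that eliminates the $k=0$ contributions.

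The main work is then to collapse the upper boundary term into the claimed form. Evaluating at $k=n$ gives $u_1(n)=(n+1)x(n-1)$ and $u_0(n)=-\varepsilon(2n+1)(2z+1)x(n-1)+n\,x(n-2)$, so
\begin{equation*}
u_0(n)F_n+u_1(n)F_{n+1}=x(n-1)\bigl(-\varepsilon(2n+1)(2z+1)F_n+(n+1)F_{n+1}\bigr)+n\,x(n-2)F_n.
\end{equation*}
The parenthesized expression is almost the recurrence $L(F_k)=0$ evaluated at $k=n-1$, which reads $(n+1)F_{n+1}-\varepsilon(2n+1)(2z+1)F_n+n\,F_{n-1}=0$; substituting this gives $-\varepsilon(2n+1)(2z+1)F_n+(n+1)F_{n+1}=-n\,F_{n-1}$. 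Thus the upper boundary term simplifies to $-n\,x(n-1)F_{n-1}+n\,x(n-2)F_n$, and since \eqref{eq:rec ann} subtracts this term, the sum equals $n\bigl(x(n-1)F_{n-1}-x(n-2)F_n\bigr)$, which is exactly \eqref{eq: D1}. The only obstacle is bookkeeping: being careful with the index shifts in the definition of $u_i$ and with the two separate applications of the recurrence (at $k=-1$ for the lower boundary and at $k=n-1$ for the upper), but each is a short direct substitution once the coefficients are written out, so no genuine difficulty arises.
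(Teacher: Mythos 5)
Your proof is correct and follows essentially the same route as the paper: specialize \eqref{eq:rec ann} to the operator $L$ of \eqref{eq:L Delannoy}, observe that the lower boundary term $u_0(0)F_0(z)+u_1(0)F_1(z)$ vanishes, and use the recurrence $(n+1)F_{n+1}(z)=\varepsilon(2n+1)(2z+1)F_n(z)-nF_{n-1}(z)$ to collapse the upper boundary into $n\bigl(x(n-1)F_{n-1}(z)-x(n-2)F_n(z)\bigr)$. The only cosmetic difference is that the paper checks the vanishing of the lower boundary directly from the initial values $D_0(z)=1$, $D_1(z)=2z+1$, whereas you invoke the recurrence at $k=-1$; since Zeilberger's recurrence is only guaranteed for $k\ge 0$, that invocation ultimately rests on the same trivial initial-value verification.
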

\begin{proof}
By Eq.~\eqref{eq:rec ann} and the easily checked fact $u_0(0)F_0(z)+u_1(0)F_1(z)=0$, we have
\begin{equation}\label{eq:Apery Delta1}
\sum_{k=0}^{n-1}L^{\ast}(x(k))F_k(z)
=
-\left(u_0(n)F_n(z)+u_1(n)F_{n+1}(z)\right),
\end{equation}
where
$u_0(n)=nx(n-2)-\varepsilon(2 n+1) (2z+1) x(n-1)$ and
$u_1(n)=(n+1) x(n-1)$.
As $L\in\ann D_k(z)$, it is clear that for any $n\geq 1$,
\begin{equation}\label{eq:shift Apery1}
(n+1)F_{n+1}(z)=\varepsilon(2n+1)(2z+1)F_n(z)-nF_{n-1}(z).
\end{equation}
Substituting equality~\eqref{eq:shift Apery1} into \eqref{eq:Apery Delta1} derives ~\eqref{eq: D1}.
\end{proof}
\begin{coro}\label{cor:divisibility-L*}
Let $L$ be given as in \eqref{eq:L Delannoy} and $n$ a positive integer. Then for any polynomial $x(k)$ with integer coefficients, we have
\begin{equation*}\label{eq: cong D1}
\sum_{k=0}^{n-1} L^{\ast}(x(k))F_k(z)
\equiv 0\pmod n.
\end{equation*}
\end{coro}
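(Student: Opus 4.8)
The plan is to read off the corollary directly from Theorem~\ref{th:Apery cong}, whose right-hand side already exhibits the factor $n$ explicitly. First I would recall that $F_k(z)=\varepsilon^k D_k(z)$ with $\varepsilon\in\{1,-1\}$, and that the Delannoy polynomials have integer coefficients, so $D_k(z)\in\bZ$ for every $z\in\bZ$ and every $k\in\bN$; hence $F_{n-1}(z)$ and $F_n(z)$ are integers. Then I would invoke the identity \eqref{eq: D1}, namely
\[
\sum_{k=0}^{n-1} L^{\ast}(x(k))F_k(z)
=n\bigl(x(n-1)F_{n-1}(z)-x(n-2)F_n(z)\bigr).
\]

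The key observation is that the quantity in parentheses is an integer whenever $x(k)$ has integer coefficients: evaluating an integer-coefficient polynomial at the integers $n-1$ and $n-2$ yields integers, and these are multiplied by the integers $F_{n-1}(z)$ and $F_n(z)$ and subtracted. Therefore the entire right-hand side is $n$ times an integer, which is exactly the assertion that
\[
\sum_{k=0}^{n-1} L^{\ast}(x(k))F_k(z)\equiv 0 \pmod n.
\]

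I expect no real obstacle here: the corollary is an immediate integrality consequence of the closed form already proved in Theorem~\ref{th:Apery cong}. The only point worth stating carefully is why each factor is an integer, which reduces to the standing assumption $z\in\bZ$ together with the explicit binomial-sum definition of $D_k(z)$. In short, the proof is a one-line deduction: apply \eqref{eq: D1} and note that all the building blocks $x(n-1),x(n-2),F_{n-1}(z),F_n(z)$ lie in $\bZ$, so the sum is divisible by $n$.
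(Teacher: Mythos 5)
Your proof is correct and is exactly the argument the paper intends: the corollary is stated as an immediate consequence of the identity \eqref{eq: D1} in Theorem~\ref{th:Apery cong}, with divisibility by $n$ following since $x(n-1)$, $x(n-2)$, $F_{n-1}(z)$, $F_n(z)\in\bZ$. The paper gives no separate proof for this corollary, and your integrality remarks fill in the only details there are.
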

Recall that $L$ in \eqref{eq:L Delannoy} is power-partible with respect to $\gamma=-\dfrac{1}{2}$.
Let
\begin{equation}\label{eq:xs1}
x_s(k)=(2k+3)^s,\quad s\in\bN.
\end{equation}
We will show that $L^\ast(x_s(k))$ is a linear combination of $(2k+1)^{i}$ with $i\equiv s+1{\pmod 2}$ and that all the coefficients in the combination are even integers.
\begin{lem}\label{lm:Structure}
Let $L$ be given by \eqref{eq:L Delannoy} and $x_s(k)$ given by Eq.~\eqref{eq:xs1}.
Then
\begin{equation}\label{eq:Explicit form of L*(x_s(k))2}
L^\ast(x_s(k))=(-2\varepsilon z-\varepsilon+1)(2k+1)^{s+1}+2\sum_{j=1}^{\left\lfloor \frac{s+1}{2}\right\rfloor}e^{(s)}_j(2k+1)^{s+1-2j},
\end{equation}
where $e^{(s)}_j$ are given as in \eqref{eq:ejs}.
\end{lem}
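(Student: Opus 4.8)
The plan is to compute $L^\ast(x_s(k))$ directly from the definition of the adjoint and then reorganize the result by a symmetrizing change of variable. First I would apply the formula $L^\ast(x(k))=\sum_{i=0}^{2}a_i(k-i)x(k-i)$ to the operator in \eqref{eq:L Delannoy}. Reading off $a_0(k)=k+1$, $a_1(k)=-\varepsilon(2k+3)(2z+1)$ and $a_2(k)=k+2$, this gives
\[
L^\ast(x(k))=(k+1)x(k)-\varepsilon(2z+1)(2k+1)x(k-1)+k\,x(k-2).
\]
Specializing to $x_s(k)=(2k+3)^s$, the three shifted arguments become $x_s(k)=(2k+3)^s$, $x_s(k-1)=(2k+1)^s$ and $x_s(k-2)=(2k-1)^s$, so that everything is a polynomial in the single quantity $t=2k+1$: indeed $k+1=\tfrac{t+1}{2}$, $k=\tfrac{t-1}{2}$, $2k+3=t+2$ and $2k-1=t-2$.

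The heart of the computation is the observation that after this substitution the first and last summands become $\tfrac12(t+1)(t+2)^s$ and $\tfrac12(t-1)(t-2)^s$, which are symmetric to one another under $2\mapsto-2$. I would therefore write their sum as $\tfrac12\big(t[(t+2)^s+(t-2)^s]+[(t+2)^s-(t-2)^s]\big)$ and expand each bracket by the binomial theorem. The symmetric combination $(t+2)^s+(t-2)^s$ retains only the even powers of $2$, while the antisymmetric one $(t+2)^s-(t-2)^s$ retains only the odd powers; this parity selection is exactly what forces every surviving exponent of $t$ to be congruent to $s+1$ modulo $2$, matching the claimed form. Collecting, the coefficient of the top power $t^{s+1}$ coming from these two terms is $1$, which combines with the contribution $-\varepsilon(2z+1)t^{s+1}$ of the middle summand to produce the stated leading coefficient $1-\varepsilon(2z+1)=-2\varepsilon z-\varepsilon+1$.

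It then remains to identify the lower coefficients. Grouping the two binomial sums by the exponent $t^{s+1-2j}$ for $j\ge1$, the even part contributes $\binom{s}{2j}2^{2j}$ and the odd part $\binom{s}{2j-1}2^{2j-1}$, whose sum is precisely $2\big(\binom{s}{2j}2^{2j-1}+\binom{s}{2j-1}2^{2j-2}\big)=2e_j^{(s)}$; this both explains the global factor $2$ and shows the coefficients are even integers. The only point requiring care is the range of $j$ and the extreme term: one must check that $j$ runs up to $\lfloor(s+1)/2\rfloor$ and that the boundary term (the constant term when $s$ is odd) is reproduced correctly. I expect this to be the main, though mild, obstacle, since at the top index one of the two binomial coefficients $\binom{s}{2j}$, $\binom{s}{2j-1}$ may have lower entry exceeding $s$; the saving grace is that such a coefficient simply vanishes, so the uniform formula \eqref{eq:ejs} for $e_j^{(s)}$ continues to give the right value and no separate boundary analysis is actually needed. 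Finally, since $2j-2\ge0$ for $j\ge1$, every $e_j^{(s)}$ is manifestly an integer, completing the verification.
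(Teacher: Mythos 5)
Your proposal is correct and follows essentially the same route as the paper's proof: both compute $L^\ast(x_s(k))$ directly from the definition of the adjoint, substitute $\ell=2k+1$ (your $t$), and expand by the binomial theorem so that parity selects exponents congruent to $s+1$ modulo $2$, with the leading term $1-\varepsilon(2z+1)$ and the coefficient $\binom{s}{2j}2^{2j}+\binom{s}{2j-1}2^{2j-1}=2e_j^{(s)}$ emerging identically. Your explicit split into the symmetric part $t[(t+2)^s+(t-2)^s]$ and antisymmetric part $(t+2)^s-(t-2)^s$, and your remark that vanishing binomial coefficients handle the boundary index $j=\lfloor(s+1)/2\rfloor$, are only cosmetic refinements of the paper's computation.
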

\pf
For simplicity, let $\ell=2k+1$.
By the definition of $L^\ast$, we have
\begin{align*}
L^\ast(x_s(k))%=&k(2k-1)^s-\varepsilon(2z+1)(2k+1)^{s+1}+(k+1)(2k+3)^s\\
=&\frac{1}{2}(\ell-1)(\ell-2)^s-\varepsilon(2z+1)\ell^{s+1}
+\frac{1}{2}(\ell+1)(\ell+2)^s\\
=&\frac{1}{2}(\ell-1)\sum_{j=0}^{s}\binom{s}{j}(-2)^{j}\ell^{s-j}+
\frac{1}{2}(\ell+1)\sum_{j=0}^{s}\binom{s}{j}2^{j}\ell^{s-j}-\varepsilon(2z+1)\ell^{s+1}\\
=&\sum_{\substack{j=0\\ j\text{ even}}}^{s}\binom{s}{j}2^{j}\ell^{s-j+1}
+\sum_{\substack{j=0\\ j\text{ odd}}}^{s}\binom{s}{j}2^{j}\ell^{s-j}
-\varepsilon(2z+1)\ell^{s+1}\\
=&(-2\varepsilon z-\varepsilon+1)\ell^{s+1}+\sum_{\substack{j=2\\ j\text{ even}}}^{s}\binom{s}{j}2^{j}\ell^{s-j+1}+\sum_{\substack{j=1\\ j\text{ odd}}}^{s}\binom{s}{j}2^{j}\ell^{s-j}\\
=&(-2\varepsilon z-\varepsilon+1)(2k+1)^{s+1}+2\sum_{j=1}^{\left\lfloor \frac{s+1}{2}\right\rfloor}e^{(s)}_j(2k+1)^{s+1-2j}.\quad \quad\qed
\end{align*}
%where
%$
%e^{(s)}_j=\binom{s}{2j}2^{2j-1}+\binom{s}{2j-1}2^{2j-2} \in\bZ
%$ for all $j=1,2,\ldots,\left\lfloor \dfrac{s+1}{2}\right\rfloor$.

%
%\begin{lem}\label{LEM:D_n(z)-D_{n-1}(z)}
%Let $n$ be a positive integer. Then
%\begin{equation*}
%\frac{D_n(z)-D_{n-1}(z)}{2} \in\bZ[z].
%\end{equation*}
%\end{lem}
%\pf Since
%$
%D_n(z)=\sum_{k=0}^{n}\binom{n+k}{2k}\binom{2k}{k}z^k.
%$
%we have
%\begin{align*}
%\frac{D_n(z)-D_{n-1}(z)}{2}
%&=\frac{1}{2}\sum_{k=1}^{n}\left(\binom{n+k}{2k}-\binom{n-1+k}{2k}\right)\binom{2k}{k}z^k\\
%&=\sum_{k=1}^{n}\binom{n+k}{2k-1}\binom{2k-1}{k}z^k\in\bZ[z].
%\end{align*}
%\qed

Now we are ready to provide the proof of Theorem \ref{th:Delanoy even}.

\noindent\emph{Proof of \eqref{eq:(2k+1)^{2r+2}} in Theorem \ref{th:Delanoy even}.}
We proceed by induction on $v$.
The case when $v=0$ is trivial with $c_0=1$.
Next, we assume that $v>0$ and \eqref{eq:(2k+1)^{2r+2}} holds for all smaller $v$'s with $c_j\in z^{-j}\bZ[z]$ ($j=0,1,\ldots,v-1$).

By taking $\varepsilon=1$ in equality \eqref{eq:Explicit form of L*(x_s(k))2}, it is easily checked that
\begin{equation}\label{eq:2v}
(2k+1)^{2v}=-\frac{1}{2z} L^{\ast}(x_{2v-1}(k))+\frac{1}{z}\sum_{j=1}^{v}e^{(2v-1)}_j(2k+1)^{2v-2j},
\end{equation}
where $e^{(2v-1)}_j\in\bZ$.
%When $v=0$, it is easy to see
%\begin{equation*}
%(2k+1)^2=-\frac{1}{2z} L^{\ast}(x_{1}(k))+\frac{1}{z}.
%\end{equation*}
%Multiplying both sides with $D_k(z)$ and summing over $k$ from $0$ to $n-1$ leads to
%\begin{align*}\label{eq:(2k+1)^2 red}
%\sum_{k=0}^{n-1}(2k+1)^2D_k(z)
%& =-\frac{1}{2z}\sum_{k=0}^{n-1} L^{\ast}(x_{1}(k))D_k(z)
% +\frac{1}{z}\sum_{k=0}^{n-1}D_k(z) \nonumber\\
%& \equiv\frac{1}{z}\sum_{k=0}^{n-1}D_k(z) \pmod n
%\end{align*}
%by Lemma \ref{th:Apery cong}.
%Thus  \eqref{eq:(2k+1)^{2r+2}} holds for $v=0$ with $c_0=\frac{1}{z}$.
Multiplying by $D_k(z)$ on both sides of \eqref{eq:2v} and summing over $k$ from $0$ to $n-1$ leads to
\[
\sum_{k=0}^{n-1}(2k+1)^{2v}D_k(z)
=-\frac{1}{2z}\sum_{k=0}^{n-1} L^{\ast}(x_{2v-1}(k))D_k(z)
 +\frac{1}{z}\sum_{j=1}^{v}e^{(2v-1)}_j\sum_{k=0}^{n-1}(2k+1)^{2v-2j}D_k(z).
\]
By the assumption, we know
\begin{equation}\label{eq:re1}
\sum_{j=1}^{v}e^{(2v-1)}_j\sum_{k=0}^{n-1}(2k+1)^{2v-2j}D_k(z)
\equiv
\sum_{j=1}^{v}e^{(2v-1)}_j c_{v-j}\sum_{k=0}^{n-1}D_k(z) \pmod n.
\end{equation}
Note also that, by Corollary \ref{cor:divisibility-L*}, when $\gcd(n,2z)=1$,
we have
\begin{equation}\label{eq:re2}
-\frac{1}{2z}\sum_{k=0}^{n-1}L^{\ast}(x_{2v-1}(k))D_k(z) \equiv 0 \pmod n.
\end{equation}
Combining \eqref{eq:re1} and \eqref{eq:re2} together arrives at
\[
\sum_{k=0}^{n-1}(2k+1)^{2v}D_k(z)\equiv
c_v \sum_{k=0}^{n-1}D_k(z) \pmod {n},
\]
where $c_v=\frac{1}{z}\sum_{j=1}^{v}e^{(2v-1)}_j c_{v-j}\in z^{-v}\bZ[z]$.
\qed

\noindent\emph{Proof of \eqref{eq:-(2k+1)^{2r+2}} in Theorem \ref{th:Delanoy even}.}
Taking $\varepsilon=-1$ in equality \eqref{eq:Explicit form of L*(x_s(k))2} leads to
\begin{equation*}\label{eq:2v+2}
(2k+1)^{2v}=\frac{1}{2(z+1)} L^{\ast}(x_{2v-1}(k))-\frac{1}{z+1}\sum_{j=1}^{v}e^{(2v-1)}_j(2k+1)^{2v-2j},
\end{equation*}
where $e^{(2v-1)}_j\in\bZ$.
The rest of the proof is then similar to that of \eqref{eq:(2k+1)^{2r+2}} and omitted.
\qed
%Sun \cite[Theorem 1.5(ii)]{Sun2014} proved that
%\begin{equation*}\label{eq:(2k+1)^2}
%\sum_{k=0}^{p-1} (2k+1)^2 D_k(z)\equiv \frac{1}{z}\left(\frac{-z}{p}\right) \pmod p
%\end{equation*}
%when $p$ is a prime and $\gcd(p,2z)=1$.

%
%\begin{theo}
%Let $p$ be a prime and $\gcd(p,2z)=1$. Then for each $v\in\bN$, there is a constant $c_v\in\bQ(z)$  such that
%\begin{equation*}
%\sum_{k=0}^{p-1}(2k+1)^{2v+2}D_k(z)\equiv
%c_v \left(\frac{-z}{p}\right) \pmod {p}.
%\end{equation*}
%\end{theo}

%The power-partible reduction provides an algorithmic way to determine $c_v$ for each $v\in\bN$.
%Thus one can obtain congruences for any given $v$ by \eqref{eq:(2k+1)^{2r+2}} and \eqref{eq:-(2k+1)^{2r+2}}.
It is easy to compute that $c_1=\frac{1}{z}$ and $c_2=\frac{4z+9}{z^2}$ in \eqref{eq:c_v}.
Then we know when $p$ is an odd prime and $\gcd(p,z)=1$,
\begin{equation}\label{eq:1(2k+1)^2}
\sum_{k=0}^{p-1} (2k+1)^2 D_k(z)\equiv \frac{1}{z}\left(\frac{-z}{p}\right) \pmod p
\end{equation}
and
\[
\sum_{k=0}^{p-1}(2k+1)^4 D_k(z)\equiv \frac{4z+9}{z^2} \left(\frac{-z}{p}\right) \pmod {p}.
\]
Congruence \eqref{eq:1(2k+1)^2} was first discovered by Sun in \cite{Sun2014}.
It is also routine to compute that $\tilde{c}_1=-\frac{1}{z+1}$ and $\tilde{c}_2=-\frac{4z-5}{(z+1)^2}$ in \eqref{eq:c_v}.
Namely, when $p$ is an odd prime and $\gcd(p,z+1)=1$, we have
\begin{equation*}\label{eq:(2k+1)^2}
\sum_{k=0}^{p-1} (-1)^k (2k+1)^2 D_k(z)\equiv -\frac{1}{z+1}\left(\frac{z+1}{p}\right) \pmod p
\end{equation*}
and
\[
\sum_{k=0}^{p-1} (-1)^k (2k+1)^4 D_k(z)\equiv -\frac{4z-5}{(z+1)^2} \left(\frac{z+1}{p}\right) \pmod {p}.
\]
\section{Supercongruences for Delannoy numbers}
This section focuses on supercongruences for $\sum_{k=0}^{n-1}\varepsilon^k(2k+1)^{2v+1}D_k$ when $n$ is power of $2$.
We will provide an algorithmic approach to constructing new supercongruences and  confirm Guo and Zeng's conjecture \eqref{eq:GZconj}.

In 2018, Sun \cite[Lemma 2.1]{Sun2018} proved that
%\begin{equation*}\label{eq:s_n(z)}
%(z+1)s_n(z)=S_n(z),\quad\forall n\in\bZ^{+}
%\end{equation*}
%and
\begin{equation}\label{eq:DnSn}
D_{n+1}(z)-D_{n-1}(z)=2z(2n+1)S_n(z)
\end{equation}
for any positive integer $n$.
Here $S_n(z)$ are the large Schr\"oder polynomials defined by Sun \cite{Sun2012, Sun2018} as
\begin{equation*}
S_n(z)=\sum_{k=0}^{n}\binom{n}{k}\binom{n+k}{k}\frac{1}{k+1}z^k.
\end{equation*}
One can see $S_n(1)$ reduces to the $n$-th large Schr\"oder number $S_n$.

In 2021, Lengyel \cite{Lengyel2021} proved that, when $a\geq 2$,
\begin{equation}\label{eq:D_{2^a}}
 D_{2^a}\equiv 1 \pmod {4^{a+1} }
\end{equation}
and
\begin{equation}\label{eq:S_{2^a}}
 S_{2^a}\equiv 2-2^{a+1} \pmod {2^{2a+1} }.
\end{equation}
The following theorem considers supercongruences of $D_{2^a+1}$ and $D_{2^a-1}$ modulo $4^{a+1}$.
\begin{theo}
Let $a\geq 2$ be a positive integer. Then we have
\begin{equation}\label{eq:D_{2^{a}+1}}
D_{2^a+1}\equiv 3+2^{a+2} \pmod {4^{a+1}}
\end{equation}
and
\begin{equation}\label{eq:D_{2^{a}-1}}
D_{2^a-1}\equiv -1 \pmod {4^{a+1}}.
\end{equation}
\end{theo}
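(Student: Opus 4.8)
The plan is to determine $D_{2^a+1}$ and $D_{2^a-1}$ modulo $4^{a+1}$ simultaneously, by setting up a $2\times 2$ linear system over $\bZ/4^{a+1}\bZ$ whose two rows come from the Schr\"oder relation \eqref{eq:DnSn} and the three-term recurrence for $D_k$, and then inverting that system using the fact that the relevant odd coefficient is a unit modulo a power of $2$. Throughout I specialize $z=1$, so $D_k=D_k(1)$; taking $\varepsilon=1$ and $z=1$ in \eqref{eq:L Delannoy} (equivalently in \eqref{eq:shift Apery1}) records the exact recurrence $(n+1)D_{n+1}=3(2n+1)D_n-nD_{n-1}$, valid for all $n\ge 1$.

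First I would extract the difference $D_{2^a+1}-D_{2^a-1}$. Evaluating \eqref{eq:DnSn} at $z=1$, $n=2^a$ gives $D_{2^a+1}-D_{2^a-1}=2(2^{a+1}+1)S_{2^a}$. Substituting Lengyel's congruence \eqref{eq:S_{2^a}}, namely $S_{2^a}\equiv 2-2^{a+1}\pmod{2^{2a+1}}$, and noting that the factor $2(2^{a+1}+1)$ lifts the error term $2^{2a+1}$ up to $2^{2a+2}=4^{a+1}$, I obtain $D_{2^a+1}-D_{2^a-1}\equiv 2(2^{a+1}+1)(2-2^{a+1})\equiv 2^{a+2}+4\pmod{4^{a+1}}$, the quadratic term $2^{2a+3}$ vanishing modulo $4^{a+1}$.

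The second row comes from the recurrence at $n=2^a$, which reads $(2^a+1)D_{2^a+1}+2^aD_{2^a-1}=3(2^{a+1}+1)D_{2^a}$. Reducing modulo $4^{a+1}$ and inserting Lengyel's congruence \eqref{eq:D_{2^a}}, $D_{2^a}\equiv 1$, yields $(2^a+1)D_{2^a+1}+2^aD_{2^a-1}\equiv 3(2^{a+1}+1)\pmod{4^{a+1}}$. Eliminating $D_{2^a+1}$ between the two relations collapses everything to $(2^{a+1}+1)\bigl(D_{2^a-1}+1\bigr)\equiv 0\pmod{4^{a+1}}$. Since $2^{a+1}+1$ is odd, it is a unit modulo $4^{a+1}$, whence $D_{2^a-1}\equiv -1$, and back-substitution into the first row gives $D_{2^a+1}\equiv 3+2^{a+2}$.

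The individual computations are elementary, so the only real care needed is the modular bookkeeping: one must verify that multiplying the $S$-congruence \eqref{eq:S_{2^a}} (known only modulo $2^{2a+1}$) by the even factor $2(2^{a+1}+1)$ genuinely raises the precision to $4^{a+1}$, and that the odd number $2^{a+1}+1$ may legitimately be cancelled. I expect this precision-tracking to be the main, if still routine, obstacle, together with the one genuine idea: neither \eqref{eq:DnSn} nor the recurrence alone determines the two values, whereas the two together form an invertible system precisely because the non-invertible coefficient $2^a$ is eliminated in favour of the odd unit $2^{a+1}+1$. The hypothesis $a\ge 2$ is inherited directly from Lengyel's congruences \eqref{eq:D_{2^a}} and \eqref{eq:S_{2^a}}.
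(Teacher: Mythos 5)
Your proof is correct, and it takes a genuinely different route from the paper's. I checked the three delicate points: multiplying \eqref{eq:S_{2^a}} by $2(2^{a+1}+1)$ really does lift the precision from $2^{2a+1}$ to $2^{2a+2}=4^{a+1}$, giving $D_{2^a+1}-D_{2^a-1}\equiv 4+2^{a+2}\pmod{4^{a+1}}$; the recurrence \eqref{eq:shift Apery1} with $\varepsilon=1$, $z=1$, $n=2^a$ combined with \eqref{eq:D_{2^a}} gives $(2^a+1)D_{2^a+1}+2^aD_{2^a-1}\equiv 3(2^{a+1}+1)\pmod{4^{a+1}}$; and elimination does collapse to $(2^{a+1}+1)(D_{2^a-1}+1)\equiv 0\pmod{4^{a+1}}$, where the odd unit cancels (e.g.\ for $a=2$: $D_3=63\equiv-1$ and $D_5=1683\equiv 19=3+2^4\pmod{64}$). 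The paper instead attacks $D_{2^a+1}$ head-on: it uses the identity $D_n=\sum_{k=0}^{n}\binom{n}{k}^2 2^k$, shows via the valuation formula $v_2\bigl(\binom{2^a+1}{k}^2 2^k\bigr)=2a+k-2(v_2(k)+v_2(k-1))$ that all terms with $k\geq 6$ vanish modulo $4^{a+1}$, and sums the six surviving terms explicitly to get \eqref{eq:D_{2^{a}+1}}; it then derives \eqref{eq:D_{2^{a}-1}} exactly as in your first row, from \eqref{eq:DnSn} and \eqref{eq:S_{2^a}}. So you share one of the paper's two ingredients (the Schr\"oder relation) but replace the direct binomial computation by a second linear relation, the three-term recurrence, at the cost of one extra input: Lengyel's congruence \eqref{eq:D_{2^a}} for $D_{2^a}$ --- which the paper quotes anyway and uses later in the proof of Theorem \ref{th:1n^3}, so this is a legitimate citation rather than a gap. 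Your argument is shorter and avoids all the $2$-adic bookkeeping of individual binomial terms; the paper's is self-contained given the formula for $D_n$ and needs only \eqref{eq:S_{2^a}} among Lengyel's results. Both inherit the hypothesis $a\geq 2$ from Lengyel's congruences, as you correctly note (and the statement genuinely fails at $a=1$, where $D_1=3\not\equiv -1\pmod{16}$).
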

\pf
Note that
\[
D_n=\sum_{k=0}^{n}\binom{n}{k}^22^k,
\]
which can be proved by checking the recurrence relation and the initial values.
Let $T(n,k)=\binom{n}{k}^22^k$. When $k\geq 1$,
\[
T(2^a+1,k)=\binom{2^a+1}{k}^22^k=
\frac{(2^a+1)^2}{k^2}\binom{2^a}{k-1}^22^k.
\]
Then one can see
\[
v_2(T(2^a+1,k))=2a+k-2(v_2(k-1)+v_2(k)),
\]
where $v_2(m)$ is the largest $t\in\bN$ such that $2^t \mid m$.

When $k\geq 6$, we have $k-2(v_2(k-1)+v_2(k))\geq 2$ and thus
\begin{equation*}\label{eq:D_{2^a+1}}
D_{2^a+1}=\sum_{k=0}^{2^a+1}T(2^a+1,k)\equiv\sum_{k=0}^{5}T(2^a+1,k) \pmod {4^{a+1}}.
\end{equation*}
It is straightforward to calculate that when $a\geq 2$,
\begin{align*}
  T(2^a+1,0) & = 1, \\
  T(2^a+1,1) & =2(4^a+2^{a+1}+1), \\
  T(2^a+1,2) & =4^a(2^a+1)^2\equiv 4^a\pmod {4^{a+1}}, \\
  T(2^a+1,3) & \equiv2\cdot4^a(4^a-1)^2\equiv 2\cdot4^a\pmod {4^{a+1}},\\
  T(2^a+1,4) & \equiv4^a(4^a-1)^2(2^{a-1}-1)^2\equiv 4^a\pmod {4^{a+1}},\\
  T(2^a+1,5) & \equiv2\cdot4^a(4^a-1)^2(2^{a-1}-1)^2(2^a-3)^2\equiv 2\cdot4^a\pmod {4^{a+1}}.
\end{align*}
Summing all above congruences together leads to \eqref{eq:D_{2^{a}+1}} after simplification.

%Next we focus on \eqref{eq:D_{2^{a}-1}}.
Let $z=1$ and $n=2^a$ in \eqref{eq:DnSn}, we arrive at
\begin{equation}\label{eq:D2aS2a}
D_{2^a-1}=D_{2^a+1}-2(2^{a+1}+1)S_{2^a}.
\end{equation}
Substituting $\eqref{eq:S_{2^a}}$ and \eqref{eq:D_{2^{a}+1}} into \eqref{eq:D2aS2a} leads to \begin{equation*}
D_{2^a-1}\equiv 3+2^{a+2}-(2^{a+1}+1)(4-2^{a+2})\equiv -1 \pmod {4^{a+1}}.
\end{equation*}
%This completes the proof.
\qed

Now we are ready to prove Theorem \ref{th:1n^3}.

\noindent\emph{Proof of \eqref{eq:1main2} in Theorem \ref{th:1n^3}.}
When $a=1$, namely, $n=2$, we know that \eqref{eq:1main2} holds as
\[
\sum_{k=0}^{n-1}(2k+1)^{2v+1}D_k=1+9^{v+1}\equiv2\pmod 8
\]
by the fact that $D_0=1$ and $D_1=3$.
In the following, we assume $a\geq 2$.
By taking $\varepsilon=1$, $z=1$ and $s=2v$ and recursively using \eqref{eq:Explicit form of L*(x_s(k))2}, one can see
\begin{equation*}\label{eq:y(k-1)1}
(2k+1)^{2v+1}=L^{\ast}(y_v(k))
\end{equation*}
with $y_v(k)$ defined by \eqref{eq:yvk0}.
Note that $y_v(k)$ is a linear combination of even powers of $(2k+3)$ and $\deg_k(y_v(k))=2v$.
Then one can see
\begin{equation*}
y_v(k-1)=y_v(-k-2)
\end{equation*}
and thus assume
\begin{equation}\label{eq:y(k-1)2}
y_v(k-1)=\sum_{j=1}^{2v}s^{(v)}_jk^j+\frac{s^{(v)}_0}{2}\quad \text{and}  \quad
y_v(k-2)=\sum_{j=1}^{2v}s^{(v)}_j(-k)^j+\frac{s^{(v)}_0}{2}
\end{equation}
with $s^{(v)}_j\in\bZ$ for any $0\leq j\leq 2v$ since $e_j^{(2v)}$ are even by the definition \eqref{eq:ejs}.
Combining \eqref{eq:yvk0} and \eqref{eq:y(k-1)2} together shows that $s^{(v)}_0$ is an odd integer given by
\begin{equation}\label{eq:s0}
s^{(v)}_0=2\sum_{j=1}^{v}e_j^{(2v)} y_{v-j}(-1)-1.
\end{equation}

Taking $z=1$, $\varepsilon=1$ and $x(k)=y_v(k)$ in \eqref{eq: D1} gives
\begin{align*}
  \sum_{k=0}^{n-1}(2k+1)^{2v+1}D_k=& \sum_{k=0}^{n-1} L^{\ast}(y_v(k))D_k \\
  =&n\left(y_v(n-1)D_{n-1}-y_v(n-2)D_n\right) \\
  \equiv&n\left((s^{(v)}_1n+\frac{s^{(v)}_0}{2})D_{n-1}
            -(-s^{(v)}_1n+\frac{s^{(v)}_0}{2})D_n\right)\\
  \equiv & n\left(s^{(v)}_1n(D_n+D_{n-1})-\frac{s^{(v)}_0}{2}(D_n-D_{n-1})\right)\\
  \equiv &\rho_vn\pmod{n^3}
\end{align*}
with $\rho_v=-s^{(v)}_0$ as $D_{2^a}-D_{2^a-1}\equiv 2 \pmod {4^{a+1}}$ and $D_{2^a}+D_{2^a-1}\equiv 0 \pmod {4^{a+1}}$
by \eqref{eq:D_{2^a}} and \eqref{eq:D_{2^{a}-1}}.
This completes the proof.
\qed

Note that $\rho_v$ can be determined automatically.
%Then one can generate supercongruences algorithmically once $v$ is givem.
For example, by taking $v=0,1,2$ in \eqref{eq:pv} respectively, we obtain
\begin{align*}
  &\sum_{k=0}^{n-1}(2k+1)D_k \equiv n\pmod{n^3}, \\
  &\sum_{k=0}^{n-1}(2k+1)^3D_k \equiv 5n\pmod{n^3}, \\
  &\sum_{k=0}^{n-1}(2k+1)^5D_k \equiv 105n\pmod{n^3}.
\end{align*}

\noindent\emph{Proof of \eqref{eq:-1main2} in Theorem \ref{th:1n^3}.}
When $n=2$, we know \eqref{eq:-1main2} holds as
\[
\sum_{k=0}^{n-1}(-1)^k(2k+1)^{2v+1}D_k=1-9^{v+1}\equiv 0 \pmod 8.
\]
In the following, we assume $n\geq 4$.
Let $\varepsilon=-1$, $z=1$ and $s=2v$ in \eqref{eq:Explicit form of L*(x_s(k))2}.
Then by recursively using \eqref{eq:Explicit form of L*(x_s(k))2}, we obtain that
\[
(2k+1)^{2v+1}=L^{\ast}(\tilde{y}_v(k))
\]
with $\tilde{y}_v(k)$ given by \eqref{eq:yvk}.
Since $\tilde{y}_v(k)$ is a linear combination of even power of $(2k+3)$,
we can assume
\begin{equation}\label{eq:y[k-1]}
\tilde{y}_v(k-1)=\sum_{j=1}^{2v}\tilde{s}^{(v)}_jk^j+\frac{\tilde{s}_0}{4}\quad \text{and}  \quad
\tilde{y}_v(k-2)=\sum_{j=1}^{2v}\tilde{s}^{(v)}_j(-k)^j+\frac{\tilde{s}_0}{4}
\end{equation}
with $\tilde{s}^{(v)}_j\in\bZ$ for any $0\leq j\leq 2v$.
Combining \eqref{eq:yvk} and \eqref{eq:y[k-1]} together gives
\[
\tilde{s}^{(v)}_1=v-\sum_{j=1}^{v}\frac{e^{(2v)}_j}{2}
\tilde{y}_{v-j}'(-1)\in\bZ.
\]
Taking $z=1$, $\varepsilon=-1$ and $x(k)=y_v(k)$ in \eqref{eq: D1} gives
\begin{align*}
  \sum_{k=0}^{n-1}(-1)^k(2k+1)^{2v+1}D_k=& \sum_{k=0}^{n-1} L^{\ast}(y_v(k))(-1)^kD_k \\
  =&n\left(y_v(n-1)(-1)^{n-1}D_{n-1}-y_v(n-2)(-1)^{n}D_n\right) \\
  \equiv&n\left(-(\tilde{s}^{(v)}_1n+\frac{\tilde{s}^{(v)}_0}{4})D_{n-1}
            -(-\tilde{s}^{(v)}_1n+\frac{\tilde{s}^{(v)}_0}{4})D_n\right)\\
  \equiv & n\left(\tilde{s}^{(v)}_1n(D_n-D_{n-1})-\frac{\tilde{s}^{(v)}_0}{4}
  (D_n+D_{n-1})\right)\\
  \equiv &\tilde{\rho}_vn^2\pmod{n^3}
\end{align*}
with $\tilde{\rho}_v=2\tilde{s}^{(v)}_1$ with the help of \eqref{eq:D_{2^a}} and \eqref{eq:D_{2^{a}-1}}.
This completes the proof.
\qed

The constant $\tilde{\rho}_v$ can also be calculated mechanically.
By taking $v=0,1,2$ in \eqref{eq:pv2} respectively, we arrive at
\begin{align}\label{eq:v=1}
  &\sum_{k=0}^{n-1}(-1)^k(2k+1)D_k \equiv 0\pmod{n^3},\nonumber \\
  &\sum_{k=0}^{n-1}(-1)^k(2k+1)^3D_k \equiv 2n^2\pmod{n^3}, \\
  &\sum_{k=0}^{n-1}(-1)^k(2k+1)^5D_k \equiv -12n^2\pmod{n^3}\nonumber.
\end{align}
Note that \eqref{eq:v=1} confirms Guo and Zeng's conjecture \eqref{eq:GZconj}.

\noindent \textbf{Acknowledgments.}
This work was supported by the National Natural Science Foundation of China (No. 12101449, 12271511 and 12271403).
All authors contribute equally and the names of the authors are listed in alphabetical order according to their family names.

\end{document}